 \newtheorem{theorem}{Theorem}[section]
 \newtheorem{proposition}[theorem]{Proposition}
 \theoremstyle{definition}
 \newtheorem{definition}[theorem]{Definition}
 \theoremstyle{remark}
 \newtheorem{remark}[theorem]{Remark}
 \numberwithin{equation}{section}
\newcommand\caF{{\mathcal F}}
\newcommand\caP{{\mathcal P}}
\newcommand\caM{{\mathcal M}}
\newcommand\caS{{\mathcal S}}
\newcommand\caO{{\mathcal O}}
\newcommand\caU{{\mathcal U}}
\newcommand\gR{{\mathbb R}}
\newcommand\gM{{\mathbb M}}
\newcommand\gB{{\mathbb B}}
\newcommand\gS{{\mathbb S}}
\newcommand\gN{{\mathbb N}}
\newcommand\algA{{\mathbf A}}
\newcommand\kg{{\mathfrak g}}
\newcommand\kS{{\mathfrak S}}
\newcommand\Ad{{\text{\textup{Ad}}}}
\newcommand\fois{\mathord{\cdot}}
\newcommand\dd{{\text{\textup{d}}}}
\newcommand\norm{\mathord{\parallel}}
\newcommand\defin{\bf}
\begin{document}

%
%
%
%
%
%
%
%
%

\title[Star-exponential of the Poincar\'e Group]
 {Non-formal deformation quantization and star-exponential of the Poincar\'e Group}

\author[P. Bieliavsky]{Pierre Bieliavsky}

\address{%
IRMP, Universit\'e Catholique de Louvain}

\email{Pierre.Bieliavsky@uclouvain.be}

\author[A. de Goursac]{Axel de Goursac}

\address{%
Charg\'e de Recherche au F.R.S.-FNRS\\
IRMP, Universit\'e Catholique de Louvain}

\email{Axelmg@melix.net}

\author[F. Spinnler]{Florian Spinnler}

\address{%
IRMP, Universit\'e Catholique de Louvain,\\
Chemin du Cyclotron, 2\\
B-1348 Louvain-la-Neuve, Belgium}

\email{Florian.Spinnler@uclouvain.be}


\subjclass{46L65; 46E10; 42B20}

\keywords{deformation quantization, exponential}

\date{October 1, 2012}
\dedicatory{To the memory of Boris Fedosov}

\begin{abstract}
We recall the construction of non-formal deformation quantization of the Poincar\'e Group $ISO(1,1)$ on its coadjoint orbit and exhibit the associated non-formal star-exponentials.
\end{abstract}

\maketitle

\section{Introduction}
Quite generally, a theory -either physical or mathematical- often consists in the kinematical data of an algebra $\algA$  together with some dynamical data encoded 
by some specific, say ``action", functional $\kS$ on $\algA$. In most cases, the algebra is associative because it represents the ``observables"
in the theory, i.e. operators whose spectrality is associated with measurements. Also, the understanding of the theory 
passes through the determination of critical points of the action functional $\kS$, or sometimes equivalently, by the determination
of first integrals: elements of $\algA$ that will be preserved by the dynamics. Disposing of a sufficient numbers of such
first integrals yields a satisfactory understanding of the system. In particular, the consideration of symmetries appears as a necessity 
- rather than a simplifying hypothesis.

Now, assume that one disposes of enough such symmetries in order to entirely determine the dynamics. In that case, the 
system $(\algA,\kS)$ could be called ``integrable" or even, quite abusively, ``free". Of course, once a ``free" system is understood, one wants to pass to a perturbation or ``singularization" of it, for instance by implementing some type of ``interactions".  This is rather clear within 
the physical context. Within the mathematical context, such a singularization could for example correspond to implementing
a foliation.
However, once perturbed, the problem remains the same: determining symmetries. One may naively hope that 
the symmetries of the unperturbed ``free" system would remain symmetries of its perturbation. It is not the case: perturbing
generally implies symmetry breaking.
\medskip

One idea, due essentially to Drinfel'd, is to define the perturbation process through the data of the symmetries themselves \cite{Drinfeld:1989} in the framework of deformation quantization \cite{Bayen:1978,Fedosov:1994}. 
This allows, even in the case of a symmetry breaking, to control the ``perturbed symmetries".
More specifically, let $G$ be a Lie group with Lie algebra $\kg$ whose enveloping (Hopf) algebra is denoted by $\caU(\kg)$.
Consider the category of $\caU(\kg)$-module algebras i.e. associative algebras that admit an (infinitesimal) action of $G$.
A (formal) {\defin Drinfel'd twist} based on $\caU(\kg)$ is an element $F$ of the space of formal power series $\caU(\kg)\otimes\caU(\kg)[[\nu]]$ of the form
$F\;=\;I\otimes I+...$ satisfying a specific cocycle property (see e.g. \cite{Giaquinto:1998}) that ensures that for every $\caU(\kg)$-module algebra $(\algA,\mu_\algA)$ the formula $\mu_{\algA}^F\;:=\;\mu_\algA\circ F$ defines an associative algebra structure $\algA_F$ on the space $\algA[[\nu]]$. Disposing 
of a Drinfel'd twist then allows to deform the above mentioned category. However, as expected, the deformed objects are no longer 
$\caU(\kg)$-module algebras. But, the data of the twist allows to define a Hopf deformation of the enveloping algebra: keeping the 
multiplication unchanged, one deforms the co-product $\Delta$ of $\caU(\kg)$ by conjugating under $F$. This yields a new
co-multiplication $\Delta_F$ that together with the undeformed multiplication underly a structure of Hopf algebra $\caU(\kg)_F$ on $\caU(\kg)[[\nu]]$. The latter so called \emph{non-standard quantum group} $\caU(\kg)_F$ now acts on every deformed algebra $\algA_F$.
\medskip

At the {\defin non-formal} level, the notion of Drinfel'd twist based on $\caU(\kg)$ corresponds to the one of {\defin universal deformation formula} for the actions $\alpha$ of $G$ on associative algebras $\algA$ of a specified topological type such as Fr\'echet- or C*-algebras. This -roughly- consists in the data of a two-point kernel $K_\theta\in C^\infty(G\times G)$ ($\theta\in\gR_0$) satisfying specific properties that guarantee 
a meaning to integral expressions of the form $a\star^\algA_\theta b\;:=\;\int_{G\times G}K_\theta(x,y)\mu_\algA(\alpha_xa\otimes\alpha_yb)\,{\rm d}x{\rm d}y$ with $a,b\in\algA$. Once well-defined, one also requires associativity of the product $\star^\algA_\theta$ as well as 
the semi-classical limit condition: $\lim_{\theta\to0}\star_\theta^\algA=\mu_\algA$ in some precise topological context. This has been performed for abelian Lie groups in \cite{Rieffel:1993} and for abelian supergroups in \cite{Bieliavsky:2010su,deGoursac:2011kv}.

In \cite{Bieliavsky:2010kg}, such universal deformation formulae have been constructed for every Piatetskii-Shapiro normal $J$-group $\gB$.
For example, the class of normal $J$-groups (strictly) contains all Iwasawa factors of Hermitean type non-compact simple Lie groups.
A universal deformation formula in particular yields a left-invariant associative function algebra on the group $\gB$. It is therefore natural to 
ask for a comparison with the usual group convolution algebra. Following ideas mainly due to Fronsdal in the context of the $\star$-representation program (representation theory of Lie group in the framework of formal $\star$-products), one may expect 
that (a non-formal version of) the notion of {\defin star-exponential} \cite{Bayen:1978,Bayen:1982} plays a crucial role in this comparison. Moreover, such a star-exponential can give access to the spectrum of operators \cite{Cahen:1984,Cahen:1985} determining possible measurements of a system.
\medskip

In this paper, we recall the construction of the non-formal deformation quantization (see \cite{Bieliavsky:2008or}) and exhibit its star-exponential for the basic case of the Poincar\'e Group $ISO(1,1)$. Such a low-dimensional case illustrate the general method developed for normal $J$-group $\gB$ (which are Kahlerian), in \cite{Bieliavsky:2010kg} for star-products and in \cite{Bieliavsky:2013sk} for star-exponentials. However, the Poincar\'e group is solvable but of course not Kahlerian, so this paper shows also that the method introduced in \cite{Bieliavsky:2010kg,Bieliavsky:2013sk} can be extended to some solvable but non-Kahlerian Lie groups.

\section{Geometry of the Poincar\'e Group}

We recall here some features concerning the geometry of the Poincar\'e group $G=ISO(1,1)=SO(1,1)\ltimes\gR^2$ and of its coadjoint orbits. First, it is diffeomorphic to $\gR^3$, so let us choose a global coordinate system $\{(a,\ell,m)\}$ of it. Its group law can be read as
\begin{equation*}
(a,\ell,m)\fois(a',\ell',m')=(a+a',e^{-2a'}\ell+\ell',e^{2a'}m+m')
\end{equation*}
Its neutral element is $(0,0,0)$ and the inverse is given by: $(a,\ell,m)^{-1}=(-a,-e^{2a}\ell,-e^{-2a}m)$. By writing $(a,\ell,m)=\exp(aH)\exp(\ell E)\exp(mF)$, we can determine its Lie algebra $\kg$:
\begin{equation*}
[H,E]=2E,\qquad [H,F]=-2F,\qquad [E,F]=0.
\end{equation*}

Let us have a look to the coadjoint orbit of $G$. After a short calculation, one can find that
\begin{equation*}
\Ad^*_g(\alpha H^*+\beta E^*+\gamma F^*)= (\alpha+2\beta\ell-2\gamma m)H^*+\beta e^{-2a}E^*+\gamma e^{2a}F^*
\end{equation*}
if $g=(a,\ell,m)\in G$ and $\{H^*,E^*,F^*\}$ is the basis of $\kg^\ast$ dual of $\{H,E,F\}$. A generic orbit of $G$ is therefore a hyperbolic cylinder. We will study in particular the orbits associated to the forms $k(E^\ast-F^\ast)$ with $k\in\gR^\ast_+$, which will be denoted by $\gM_k$ or simply by $\gM$. The Poincar\'e quotient $\gM_k$ is globally diffeomorphic to $\gR^2$, so we choose the following coordinate system:
\begin{equation}
(a,\ell):=\Ad^*_{(a,\ell,0)}k(E^\ast-F^\ast)=k(2\ell H^*+e^{-2a}E^*-e^{2a}F^*).\label{eq-ident}
\end{equation}
$\gM$ is a $G$-homogeneous space for the coajoint action:
\begin{multline}
(a,\ell,m)\fois(a',\ell'):=\Ad^*_{(a,\ell,m)}k(2\ell' H^*+e^{-2a'}E^*-e^{2a'}F^*)\\
=(a+a',\ell'+e^{-2a'}\ell+e^{2a'}m).\label{eq-act}
\end{multline}

\begin{remark}
Note that the affine group $\gS$ (connected component of the identity of ``ax+b'') is the subgroup of $G$ generated by $H$ and $E$, i.e. by simply considering the two first coordinates $(a,\ell)$ of $G$. Actually the identification \eqref{eq-ident} yields a diffeomorphism between $\gS$ and $\gM$ which is $\gS$-equivariant with respect to the left action of $\gS$ and its action on $\gM$ by \eqref{eq-act} as a subgroup of $G$. This identification $\gS\simeq\gM$ is useful to construct star-products.
\end{remark}

The fundamental fields of the action \eqref{eq-act}, defined by
\begin{equation*}
X^\ast_{(a,\ell)}f=\frac{\dd}{\dd t}|_{t=0}f(\exp(-tX)\fois(a,\ell)),
\end{equation*}
for $X\in\kg$, $(a,\ell)\in\gM$, $f\in C^\infty(\gM)$,
are given by
\begin{equation*}
H^*_{(a,\ell)}=-\partial_a,\qquad E^*_{(a,\ell)}=-e^{-2a}\partial_\ell,\qquad F^*_{(a,\ell)}=-e^{2a}\partial_\ell.
\end{equation*}
This permits to compute the {\defin Kostant-Kirillov-Souriau symplectic form} of $\gM$,  $\omega_\varphi(X^*_\varphi,Y^*_\varphi):=\langle\varphi,[X,Y]\rangle$, for $\varphi\in\gM\subset\kg^\ast$, and $X,Y\in\kg$. One finds
\begin{equation}
\omega_{(a,\ell)}=2k\dd a\wedge\dd\ell. \label{eq-kks}
\end{equation}
For different values of $k\in\gR^\ast_+$, the $(\gM_k,\omega)$ are symplectomorphic, so we set $k=1$ in the following. Since the action of $G$ on its coadjoint orbit $\gM$ is strongly hamiltonian, there exists a Lie algebra homomorphism $\lambda:\kg\to C^\infty(\gM)$ (for the Poisson bracket on $\gM$ associated to $\omega$), called the {\defin moment map} and given by
\begin{equation}
\lambda_H=2\ell,\qquad \lambda_E=e^{-2a},\qquad \lambda_F=-e^{2a}.\label{eq-moment}
\end{equation}

\begin{proposition}
The exponential of the group $G$ is given by
\begin{equation*}
e^{tX}=\Big(\alpha t,\frac{\beta}{\alpha}e^{-\alpha t}\sinh(\alpha t),\frac{\gamma}{\alpha}e^{\alpha t}\sinh(\alpha t)\Big)
\end{equation*}
for $X=\alpha H+\beta E+\gamma F$.
\end{proposition}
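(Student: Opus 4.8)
The plan is to compute the one-parameter subgroup $t\mapsto e^{tX}$ by solving the left-invariant ODE it satisfies, namely $\frac{\dd}{\dd t}e^{tX}=(L_{e^{tX}})_{*}X$ with initial condition $e^{0}=(0,0,0)$. First I would write a generic curve as $g(t)=(a(t),\ell(t),m(t))$ and, using the group law $(a,\ell,m)\fois(a',\ell',m')=(a+a',e^{-2a'}\ell+\ell',e^{2a'}m+m')$, compute the differential of left translation at the identity so as to express the left-invariant vector fields corresponding to $H$, $E$, $F$. Evaluating $X=\alpha H+\beta E+\gamma F$ as a left-invariant field at $g(t)$ then yields an explicit first-order system for $(a(t),\ell(t),m(t))$.

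Concretely, differentiating $g(t)\fois(s\, \text{-increment})$ or, more simply, writing $g(t+\epsilon)=g(t)\fois g(\epsilon)$ with $g(\epsilon)\approx(\alpha\epsilon,\beta\epsilon,\gamma\epsilon)$ to first order, the group law gives $a'=\alpha$, $\ell'=e^{-2a}\beta$, $m'=e^{2a}\gamma$ (here $'=\dd/\dd t$). The first equation integrates immediately to $a(t)=\alpha t$. Substituting into the remaining two decoupled linear equations, $\ell'(t)=\beta e^{-2\alpha t}$ and $m'(t)=\gamma e^{2\alpha t}$, and integrating from $0$ with $\ell(0)=m(0)=0$, I get $\ell(t)=\frac{\beta}{2\alpha}(1-e^{-2\alpha t})$ and $m(t)=\frac{\gamma}{2\alpha}(e^{2\alpha t}-1)$. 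It then remains only to recognize these as the claimed closed forms: $\frac{\beta}{2\alpha}(1-e^{-2\alpha t})=\frac{\beta}{\alpha}e^{-\alpha t}\sinh(\alpha t)$ and $\frac{\gamma}{2\alpha}(e^{2\alpha t}-1)=\frac{\gamma}{\alpha}e^{\alpha t}\sinh(\alpha t)$, which follow from $\sinh(\alpha t)=\tfrac12(e^{\alpha t}-e^{-\alpha t})$.

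An alternative, essentially equivalent route is to verify directly that the proposed formula defines a one-parameter subgroup — i.e. that $e^{sX}\fois e^{tX}=e^{(s+t)X}$ under the given group law — and that its derivative at $t=0$ is $X=\alpha H+\beta E+\gamma F$ in the chosen coordinates $(a,\ell,m)=\exp(aH)\exp(\ell E)\exp(mF)$ near the identity; uniqueness of one-parameter subgroups then finishes it. I would likely present the ODE derivation as the main argument and leave the subgroup identity as a remark or consistency check.

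The computation is entirely elementary; the only genuinely delicate point is the case $\alpha=0$, where the formulas above are $0/0$ and must be read as limits: $\frac{\beta}{\alpha}e^{-\alpha t}\sinh(\alpha t)\to\beta t$ and likewise $\frac{\gamma}{\alpha}e^{\alpha t}\sinh(\alpha t)\to\gamma t$ as $\alpha\to 0$, consistent with $e^{t(\beta E+\gamma F)}=(0,\beta t,\gamma t)$ since $E$ and $F$ commute. I expect the bookkeeping of the left-invariant fields (getting the signs and the $e^{\pm 2a}$ factors right from the group law) to be the only place where care is needed; everything else is direct integration.
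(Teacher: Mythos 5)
Your proposal is correct and follows essentially the same route as the paper, whose proof likewise differentiates the semigroup identity $e^{(t+s)X}=e^{sX}e^{tX}$ with respect to $s$ and integrates the resulting ODE system. One small bookkeeping remark: the equations you actually wrote, $a'=\alpha$, $\ell'=\beta e^{-2a}$, $m'=\gamma e^{2a}$, arise from multiplying by $g(\epsilon)$ on the \emph{left} (i.e.\ the right-invariant field, exactly as in the paper's convention), not from the left-invariant field $(L_{e^{tX}})_{*}X$ announced in your plan — but since both fields share the same integral curve through the identity, the computation and conclusion are unaffected.
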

\begin{proof}
It is a direct calculation using the semigroup property $e^{(t+s)X}=e^{sX}e^{tX}$ and by deriving by $s$.
\end{proof}

For $g=(a,\ell,m)\in G$, we can obtain straighforwardly the logarithm by inversing the above equation:
\begin{equation}
\log(a,\ell,m)= aH+\frac{ae^a\ell}{\sinh(a)}E+\frac{ae^{-a}m}{\sinh(a)}F\label{eq-log}
\end{equation}
and the {\defin BCH expression}:
\begin{multline}
\text{BCH}(X_1,X_2)=\log(e^{X_1}e^{X_2})=(\alpha_1+\alpha_2)H\\
+\frac{(\alpha_1+\alpha_2)}{\sinh(\alpha_1+\alpha_2)}\Big(\frac{\beta_1}{\alpha_1}e^{-\alpha_2}\sinh(\alpha_1)+\frac{\beta_2}{\alpha_2}e^{\alpha_1}\sinh(\alpha_2)\Big)E\\
+\frac{(\alpha_1+\alpha_2)}{\sinh(\alpha_1+\alpha_2)}\Big(\frac{\gamma_1}{\alpha_1}e^{\alpha_2}\sinh(\alpha_1)+\frac{\gamma_2}{\alpha_2}e^{-\alpha_1}\sinh(\alpha_2)\Big)F\label{eq-bchGroup}
\end{multline}
for $X_i=\alpha_i H+\beta_i E+\gamma_i F\in\kg$.

\section{Deformation quantization}

\subsection{Star-products}

Due to the identification $\gM\simeq\gR^2$, we can endow the space of Schwartz functions $\caS(\gR^2)$ with the Moyal product associated to the constant KKS symplectic form \eqref{eq-kks}:
\begin{equation}
(f\star^0_\theta h)(a,\ell)=\frac{4}{(\pi\theta)^2}\int\dd a_i\dd \ell_i\ f(a_1+a,\ell_1+\ell)
h(a_2+a,\ell_2+\ell) e^{-\frac{4i}{\theta}(a_1\ell_2-a_2\ell_1)}\label{eq-moyal}
\end{equation}
for $f,h\in\caS(\gR^2)$. It turns out that this associative star-product is {\defin covariant} for the moment map \eqref{eq-moment}, formally in the deformation parameter $\theta$:
\begin{equation}
\forall X,Y\in\kg\quad:\quad [\lambda_X,\lambda_Y]_{\star_\theta^0}=-i\theta\lambda_{[X,Y]}.\label{eq-cov}
\end{equation}
Nonetheless, it is not $G$-{\defin invariant}, one does not have:
\begin{equation}
\forall g\in G\quad:\quad g^\ast(f\star_\theta^0 h)= (g^\ast f)\star_\theta^0 (g^\ast h)\label{eq-invar}
\end{equation}
in general, where $g^\ast$ means the pullback of the action \eqref{eq-act} of $G$ on $\gM$: $g^\ast f:=f(g\,\fois)$. In the following, we exhibit intertwining operators $T_\theta$ (see \cite{Bieliavsky:2002}) in order to construct invariant star-products on $\gM$, i.e. satisfying \eqref{eq-invar}.

We consider $\caP_\theta$ a invertible multiplier on $\gR$: $\caP_\theta\in\caO_M^{\times}(\gR)=\{f\in C^\infty(\gR),\ \forall h\in\caS(\gR)\ f.h\in\caS(\gR)\text{ and } f^{-1}.h\in\caS(\gR)\}$, and $\phi_\theta$ defined by:
\begin{equation*}
\phi_\theta(a,\ell)=\Big(a,\frac{2}{\theta}\sinh(\frac{\theta\ell}{2})\Big),\qquad \phi_\theta^{-1}(a,\ell)=\Big(a,\frac{2}{\theta}\text{arcsinh}(\frac{\theta\ell}{2})\Big).
\end{equation*}
We define the operator $T_\theta=\caP_\theta(0)\caF^{-1}\circ(\phi_\theta^{-1})^\ast\circ \caP_\theta^{-1}\circ\caF$, from $\caS(\gR^2)$ to $\caS'(\gR^2)$, where $\caP_\theta^{-1}$ acts by multiplication by $\caP_\theta(\ell)^{-1}$ and the partial Fourier transformation is given by:
\begin{equation}
\caF f(a,\xi)=\hat f(a,\xi):=\int\dd\ell\ e^{-i\xi\ell}f(a,\ell).\label{eq-fourier}
\end{equation}
The normalization is chosen so that $T_\theta 1=1$. On its image, $T_\theta$ is invertible. The explicit expressions are:
\begin{align*}
&T_\theta f(a,\ell)=\frac{\caP_\theta(0)}{2\pi}\int\dd t\dd\xi\ \cosh(\frac{\theta t}{2})\caP_\theta(t)^{-1}e^{\frac{2i}{\theta}\sinh(\frac{\theta t}{2})\ell-i\xi t}f(a,\xi)\\
&T^{-1}_\theta f(a,\ell)=\frac{1}{2\pi\caP_\theta(0)}\int\dd t\dd\xi\ \caP_\theta(t)e^{-\frac{2i}{\theta}\sinh(\frac{\theta t}{2})\xi+it\ell}f(a,\xi)
\end{align*}
In \cite{Bieliavsky:2002}, it has been shown that this intertwining operator yields an associative product on $T_\theta(\caS(\gR^2))$: $f\star_{\theta,\caP}h:=T_\theta((T_\theta^{-1}f)\star_\theta^0(T_\theta^{-1}h))$ which is $G$-invariant. Its explicit expression is: $\forall f,h\in T_\theta(\caS(\gR^2))$,
\begin{multline}
(f\star_{\theta,\caP}h)(a,\ell)=\frac{4}{(\pi\theta)^{2}}\int\dd a_i\dd \ell_i \cosh(2(a_1-a_2))\frac{\caP_\theta(\frac{4}{\theta}(a_1-a))\caP_\theta(\frac{4}{\theta}(a-a_2))}{\caP_\theta(\frac{4}{\theta}(a_1-a_2))\caP_\theta(0)}\\
e^{\frac{2i}{\theta}(\sinh(2(a_1-a_2))\ell+\sinh(2(a_2-a))\ell_1+\sinh(2(a-a_1))\ell_2)}f(a_1,\ell_1)h(a_2,\ell_2).\label{eq-prod}
\end{multline}

\subsection{Schwartz space}

In \cite{Bieliavsky:2010kg}, a Schwartz space adapted to $\gM$ has been introduced, which is different from the usual one $\caS(\gR^2)$ in the global chart $\{(a,\ell)\}$ \eqref{eq-ident}.
\begin{definition}
\label{def-schwartz}
The {\defin Schwartz space} of $\gM$ is defined as
\begin{multline*}
\caS(\gM)=\{f\in C^\infty(\gM)\quad \forall \alpha=(k,p,q,n)\in\gN^4,\\
\norm f\norm_{\alpha}:=\sup_{(a,\ell)}\Big|\frac{\sinh(2a)^k}{\cosh(2a)^p}\ell^q\partial_a^p\partial_\ell^n f(a,\ell)\Big|<\infty\}.
\end{multline*}
\end{definition}
\noindent The space $\caS(\gM)$ corresponds to the usual Schwartz space in the coordinates $(r,\ell)$ with $r=\sinh(2a)$. It is stable by the action of $G$:
\begin{equation*}
\forall f\in\caS(\gM),\ \forall g\in G\quad:\quad g^\ast f\in\caS(\gM)
\end{equation*}
due to the formulation of the action of $G$ in the coordinates $(r,\ell)$:
\begin{multline*}
(r,\ell,m)(r',\ell')=\Big(r\sqrt{1+r'^2}+r'\sqrt{1+r^2},\ell'+(\sqrt{1+r'^2}-r')\ell\\
+(\sqrt{1+r'^2}+r')m\Big).
\end{multline*}
Moreover, $\caS(\gM)$ is a Fr\'echet nuclear space endowed with the seminorms $(\norm f\norm_{\alpha})$.
\medskip
For $f,h\in\caS(\gM)$, the product $f\star_{\theta,\caP}h$ is well-defined by \eqref{eq-prod}. However, it is not possible to show that it belongs to $\caS(\gM)$ unless we consider this expression as an oscillatory integral. Let us define this concept. For $F\in\caS(\gM^2)$, one can show using integrations by parts that:
\begin{multline}
\int\dd a_i\dd\ell_i\ e^{\frac{2i}{\theta}(\sinh(2a_2)\ell_1-\sinh(2a_1)\ell_2)} F(a_1,a_2,\ell_1,\ell_2)=\\
\int\dd a_i\dd\ell_i\ e^{\frac{2i}{\theta}(\sinh(2a_2)\ell_1-\sinh(2a_1)\ell_2)} \Big(\frac{1-\frac{\theta^2}{4}\partial_{\ell_2}^2}{1+\sinh^2(2a_1)}\Big)^{k_1}\Big(\frac{1-\frac{\theta^2}{4}\partial_{\ell_1}^2}{1+\sinh^2(2a_2)}\Big)^{k_2}\\
\Big(\frac{1-\frac{\theta^2}{16\cosh^2(2a_2)}\partial_{a_2}^2}{1+\ell_1^2}\Big)^{p_1} \Big(\frac{1-\frac{\theta^2}{16\cosh^2(2a_1)}\partial_{a_1}^2}{1+\ell_2^2}\Big)^{p_2}F(a_1,a_2,\ell_1,\ell_2)\\
= \int\dd a_i\dd\ell_i\ e^{\frac{2i}{\theta}(\sinh(2a_2)\ell_1-\sinh(2a_1)\ell_2)} \frac{1}{(1+\sinh^2(2a_1))^{k_1}}\\
\frac{DF(a_1,a_2,\ell_1,\ell_2)}{(1+\sinh^2(2a_2))^{k_2}(1+\ell_1^2)^{p_1}(1+\ell_2^2)^{p_2}}\label{eq-osc}
\end{multline}
for any $k_i,p_i\in\gN$, and where $D$ is a linear combination of products of bounded functions (with every derivatives bounded) in $(a_i,\ell_i)$ with powers of $\partial_{\ell_i}$ and $\frac{1}{\cosh(2a_i)}\partial_{a_i}$. The first expression of \eqref{eq-osc} is not defined for non-integrable functions $F$ bounded by polynoms in $r_i:=\sinh(2a_i)$ and $\ell_i$. However, the last expression of \eqref{eq-osc} is well-defined for $k_i,p_i$ sufficiently large. Therefore it gives a sense to the first expression, now understood as an {\defin oscillatory integral}, i.e. as being equal to the last expression. This definition of oscillatory integral \cite{Bieliavsky:2001os,Bieliavsky:2010kg} is unique, in particular unambiguous in the powers $k_i,p_i$ because of the density of $\caS(\gM)$ in polynomial functions in $(r,\ell)$ of a given degree. Note that this corresponds to the usual oscillatory integral \cite{Hormander:1979} in the coordinates $(r,\ell)$.

The first part of the next Theorem shows that this concept of oscillatory integral is necessary \cite{Bieliavsky:2010kg} for $\caS(\gM)$ to obtain an associative algebra, while the other parts have been treated in \cite{Bieliavsky:2008or}.
\begin{theorem}
Let $\caP:\gR\to C^\infty(\gR)$ be a smooth map such that $\caP_0\equiv1$, and $\caP_\theta(a)$ as well as its inverse are bounded by $C\sinh(2a)^k$, $k\in\gN$, $C>0$.
\begin{itemize}
\item Then, the expression \eqref{eq-prod}, understood as an oscillatory integral, yields a $G$-invariant non-formal deformation quantization.\\
In particular, $(\caS(\gM),\star_{\theta,\caP})$ is a Fr\'echet algebra.
\item For $f,h\in\caS(\gM)$, the map $\theta\mapsto f\star_{\theta,\caP} h$ is smooth and admits a $G$-invariant formal star-product as asymptotic expansion in $\theta=0$.
\item Every $G$-invariant formal star-product on $\gM$ can be obtained as an expansion of a $\star_{\theta,\caP}$, for a certain $\caP$.
\item For $\caP_\theta(a)=\caP_\theta(0)\sqrt{\cosh(a\theta/2)}$, one has the tracial identity: $\int f\star_{\theta,\caP}h=\int f\fois h$.
\end{itemize}
\end{theorem}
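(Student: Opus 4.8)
\emph{Proof proposal.} The strategy is to transport, through the intertwiner $T_\theta$, the known features of the Moyal product \eqref{eq-moyal} on $\caS(\gR^2)$ -- reinterpreted via the oscillatory‑integral regularization \eqref{eq-osc} -- for the first three items (the analytic core is that of \cite{Bieliavsky:2002,Bieliavsky:2008or,Bieliavsky:2010kg}), and to read the trace identity off a sharper structural fact: for the indicated $\caP$, $T_\theta$ is unitary on $L^2(\gM,\dd a\,\dd\ell)$. For the \textbf{first item}, translate $a_i\mapsto a_i+a$ in \eqref{eq-prod}: the phase becomes $\sinh(2(a_1-a_2))\ell+\sinh(2a_2)\ell_1-\sinh(2a_1)\ell_2$, whose last two terms are exactly the phase of \eqref{eq-osc}, while $\sinh(2(a_1-a_2))\ell$ (free of $\ell_1,\ell_2$) joins the $\caP$‑quotient and $f(a_1+a,\ell_1)h(a_2+a,\ell_2)$ in the amplitude. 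Using $\cosh(2x)=\sqrt{1+\sinh^2(2x)}$, the addition formula for $\sinh(2(a_1-a_2))$, and the hypothesis that $\caP_\theta$ and $\caP_\theta^{-1}$ are $O(\sinh(2a)^k)$, this amplitude and all its $(a_i,\ell_i)$‑derivatives are dominated by a fixed polynomial in $r_i=\sinh(2a_i)$ and the $\ell_i$, rapidly decreasing in each $\ell_i$ because $f,h\in\caS(\gM)$; it is thus an admissible symbol, \eqref{eq-osc} applies for $k_i,p_i$ large, and the integral converges. Differentiating the regularized (last) expression of \eqref{eq-osc} in the base point $(a,\ell)$ under the integral sign, and integrating by parts repeatedly in $a_1,a_2$ against $e^{\frac{2i}{\theta}\sinh(2(a_1-a_2))\ell}$ to manufacture decay in $\ell$, one gets $f\star_{\theta,\caP}h\in\caS(\gM)$ with seminorm bounds $\norm{f\star_{\theta,\caP}h}\norm_\alpha\le C_\alpha\,\norm f\norm_\beta\,\norm h\norm_\gamma$, so $(\caS(\gM),\star_{\theta,\caP})$ is a Fr\'echet algebra; associativity and $G$‑invariance \eqref{eq-invar} hold on $T_\theta(\caS(\gR^2))$ by the construction of \cite{Bieliavsky:2002} and extend to $\caS(\gM)$ by density of $\caS(\gM)$ in the polynomial symbols in $(r,\ell)$, and the semiclassical limit $\lim_{\theta\to0}f\star_{\theta,\caP}h=f\fois h$ comes out of the stationary‑phase analysis below.

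For the \textbf{second and third items}, after the regularization \eqref{eq-osc} the integrand of \eqref{eq-prod} is smooth in $\theta\in\gR^\ast$ with bounds locally uniform in $\theta$, so $\theta\mapsto f\star_{\theta,\caP}h$ is smooth; its $\theta=0$ expansion is obtained by stationary phase. The phase $\sinh(2(a_1-a_2))\ell+\sinh(2(a_2-a))\ell_1+\sinh(2(a-a_1))\ell_2$ has the sole critical point $a_1=a_2=a$, $\ell_1=\ell_2=\ell$, with critical value $0$ and non‑degenerate Hessian; the leading term is $f(a,\ell)h(a,\ell)$ and the whole asymptotic series is a formal star‑product, each coefficient $G$‑invariant by $G$‑invariance of every $\star_{\theta,\caP}$ together with uniqueness of asymptotic expansions. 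For the converse one invokes the classification of $G$‑invariant formal star‑products on the two‑dimensional symplectic $G$‑space $\gM$ and checks that asymptotic expansion sends the family of admissible $\caP$ onto it, exactly as in \cite{Bieliavsky:2008or,Bieliavsky:2010kg}.

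For the \textbf{fourth item}, I would argue structurally. Integrating the kernel of $T_\theta$ over the fibre variable first shows $\int_\gM T_\theta g=\int_\gM g$ for every admissible $\caP$ (because $\phi_\theta^{-1}$ fixes $\ell=0$ and $\caP_\theta(0)\caP_\theta(0)^{-1}=1$); hence $\int f\star_{\theta,\caP}h=\int (T_\theta^{-1}f)\star^0_\theta(T_\theta^{-1}h)$, which by the tracial property of the Moyal product \eqref{eq-moyal} (for $\dd a\,\dd\ell$, hence for any scalar multiple of it) equals $\int (T_\theta^{-1}f)(T_\theta^{-1}h)$. It remains that $T_\theta^{-1}$ preserve the bilinear pairing $(f,h)\mapsto\int fh$, and this is where $\caP_\theta(a)=\caP_\theta(0)\sqrt{\cosh(a\theta/2)}$ is used: $\sqrt{\cosh(a\theta/2)}$ is the square root of the fibrewise Jacobian of $\phi_\theta$, so ``multiply by $\caP_\theta^{-1}$, then pull back by $\phi_\theta^{-1}$'' equals, up to the constant $\caP_\theta(0)^{-1}$, the fibrewise $L^2$‑unitary $g\mapsto\sqrt{J_{\phi_\theta^{-1}}}\,(\phi_\theta^{-1})^\ast g$, whence $T_\theta=\caF^{-1}\circ U\circ\caF$ with $U$ the unitary pullback along the fibrewise‑odd map $\phi_\theta$. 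The oddness $\phi_\theta(a,-\ell)=-\phi_\theta(a,\ell)$ makes $U$ commute with fibrewise parity, and then the elementary identity $\int(\caF^{-1}A)(\caF^{-1}B)=\tfrac1{2\pi}\int A(\xi)B(-\xi)\,\dd\xi$ together with the change of variables $y=\phi_\theta(\ell)$ give $\int(T_\theta^{-1}f)(T_\theta^{-1}h)=\int fh$, as desired. (Equivalently, by brute force: with this $\caP$ the $\caP$‑quotient in \eqref{eq-prod} collapses to $\sqrt{\cosh(2(a_1-a))\cosh(2(a-a_2))/\cosh(2(a_1-a_2))}$, and performing the $\ell$‑integral, then the $a$‑integral via $u=\sinh(2(b-a))$, then the $\ell_i$‑integrals, produces a cascade of $\delta$'s collapsing everything onto $\int f\fois h$, all constants multiplying to $1$.)

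\textbf{Main obstacle.} The delicate step is the first item: extracting seminorm estimates robust enough to survive differentiation in the base point inside the regularized integral \eqref{eq-osc}, and recognizing that the stated polynomial growth hypothesis on $\caP_\theta$ is precisely the condition that makes the amplitude an admissible symbol. Once that is in place, items two to four are comparatively routine -- differentiation under the integral plus stationary phase, a citation, and the unitarity of $T_\theta$, respectively.
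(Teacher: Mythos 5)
Your proposal cannot be checked against a written proof in the paper, because the paper gives none: the statement is presented as a recollection, with the first item attributed to \cite{Bieliavsky:2010kg} (and \cite{Bieliavsky:2002} for invariance and associativity on $T_\theta(\caS(\gR^2))$) and the remaining items to \cite{Bieliavsky:2008or}. Measured against that, your outline follows essentially the same route as the cited works for items one to three: transport of the Moyal product \eqref{eq-moyal} by $T_\theta$, the translation $a_i\mapsto a_i+a$ matching the phase of \eqref{eq-prod} with that of the oscillatory-integral scheme \eqref{eq-osc} (this identification is correct), integration by parts to produce Schwartz seminorm estimates, and stationary phase at the unique critical point $a_1=a_2=a$, $\ell_1=\ell_2=\ell$ (your computation of the critical point, critical value $0$ and off-diagonal non-degenerate Hessian is right). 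Where you genuinely add something is the fourth item: instead of citing, you give a self-contained argument, and it closes. Indeed $\int T_\theta g=\int g$ for any admissible $\caP$ (integrating out $\ell$ evaluates the partial Fourier transform at $\xi=0$, where $\phi_\theta^{-1}$ is fixed and the $\caP_\theta(0)$ factors cancel), Moyal traciality reduces everything to the pairing property of $T_\theta^{-1}$, and a fibrewise Parseval computation shows that this pairing is preserved exactly when $\caP_\theta(t)\caP_\theta(-t)=\caP_\theta(0)^2\cosh(\theta t/2)$, which the stated $\caP$ satisfies; your ``square root of the fibrewise Jacobian'' reading of $\sqrt{\cosh(a\theta/2)}$ is the correct structural explanation (your equation $\phi_\theta(a,-\ell)=-\phi_\theta(a,\ell)$ should read oddness in the fibre variable only, a harmless slip).

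The caveat is that the analytically heavy parts remain at the level of a plan, precisely where the cited papers do the work: the claim that the amplitude obtained after absorbing $e^{\frac{2i}{\theta}\sinh(2(a_1-a_2))\ell}$ is an admissible symbol needs care, since its $a_i$-derivatives grow like $\ell\cosh(2(a_1-a_2))$ and the decay in the external variable $\ell$ must then be manufactured by further integrations by parts uniformly in the seminorm estimates; the extension of associativity from $T_\theta(\caS(\gR^2))$ to $(\caS(\gM),\star_{\theta,\caP})$ requires a genuine density-plus-continuity argument, not just the remark you make; and item three is, in your text as in the paper, a pure appeal to the classification in \cite{Bieliavsky:2008or}. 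None of these is a wrong step, but they are the substance of \cite{Bieliavsky:2010kg,Bieliavsky:2008or} rather than consequences of what you wrote.
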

We denote $\star_\theta$ the product $\star_{\theta,\caP}$ with $\caP_\theta(a)=\sqrt{\cosh(a\theta/2)}$, therefore satisfying the tracial identity.

\subsection{Schwartz multipliers}

Let us consider the topological dual $\caS'(\gM)$ of $\caS(\gM)$. In the coordinates $(r,\ell)$, it corresponds to tempered distributions. By denoting $\langle-,-\rangle$ the duality bracket between $\caS'(\gM)$ and $\caS(\gM)$, one can extend the product $\star_\theta$ (with tracial identity) as $\forall T\in\caS'(\gM)$, $\forall f,h\in\caS(\gM)$,
\begin{equation*}
\langle T\star_\theta f,h\rangle:=\langle T,f\star_\theta h\rangle\ \text{ and }\ \langle f\star_\theta T,h\rangle:=\langle T,h\star_\theta f\rangle,
\end{equation*}
which is compatible with the case $T\in\caS(\gM)$. Then, we define \cite{Bieliavsky:2013sk}:
\begin{multline*}
\caM_{\star_\theta}(\gM):=\{T\in\caS'(\gM),\ f\mapsto T\star_\theta f,\ f\mapsto f\star_\theta T\text{ are continuous}\\
\text{from }\caS(\gM) \text{ into itself}\},
\end{multline*}
and the product can be extended to $\caM_{\star_\theta}(\gM)$ by:
\begin{equation*}
\forall S,T\in\caM_{\star_\theta}(\gM),\ \forall f\in\caS(\gM)\quad:\quad \langle S\star_\theta T,f\rangle:=\langle S,T\star_\theta f\rangle=\langle T,f\star_\theta S\rangle.
\end{equation*}
We can equipy $\caM_{\star_\theta}(\gM)$ with the topology associated to the seminorms:
\begin{equation*}
\norm T\norm_{B,\alpha,L}=\sup_{f\in B}\norm T\star_\theta f\norm_\alpha\ \text{ and }\ \norm T\norm_{B,\alpha,R}=\sup_{f\in B}\norm f\star_\theta T\norm_\alpha
\end{equation*}
where $B$ is a bounded subset of $\caS(\gM)$, $\alpha\in\gN^4$ and $\norm f\norm_\alpha$ is the Schwartz seminorm introduced in Definition \ref{def-schwartz}. Note that $B$ can be described as a set satisfying $\forall\alpha$, $\sup_{f\in B}\norm f\norm_\alpha$ exists.
\begin{proposition}
$(\caM_{\star_\theta}(\gM),\star_\theta)$ is an associative Hausdorff locally convex complete and nuclear algebra, with separately continuous product, called the {\defin multiplier algebra}.
\end{proposition}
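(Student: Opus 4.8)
The plan is to establish the five asserted properties of $(\caM_{\star_\theta}(\gM),\star_\theta)$ in roughly the order in which they unlock one another, using throughout that $\caS(\gM)$ is Fréchet nuclear (stated after Definition \ref{def-schwartz}) and that $(\caS(\gM),\star_\theta)$ is a Fréchet algebra (first bullet of the Theorem). First I would check that $\star_\theta$ is well defined on $\caM_{\star_\theta}(\gM)$: given $S,T\in\caM_{\star_\theta}(\gM)$, the map $f\mapsto S\star_\theta(T\star_\theta f)$ is a composition of two continuous endomorphisms of $\caS(\gM)$, hence continuous, and dually $f\mapsto(f\star_\theta S)\star_\theta T$ likewise; one must verify $\langle S,T\star_\theta f\rangle=\langle T,f\star_\theta S\rangle$, which follows from the tracial identity $\int f\star_\theta h=\int f\fois h$ and the density of $\caS(\gM)$ in $\caS'(\gM)$ by a standard approximation argument (pick $f_n\to S$, $h_n\to T$ in the appropriate weak sense and pass to the limit in $\int f_n\star_\theta h_n\star_\theta f=\int f\star_\theta f_n\star_\theta h_n$). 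Associativity of $\star_\theta$ on $\caM_{\star_\theta}(\gM)$ then reduces, again by density and separate continuity, to associativity on $\caS(\gM)$, which is given.

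Next I would treat the topology. Hausdorffness is immediate: if $\norm T\norm_{B,\alpha,L}=0$ for all $B,\alpha$ then $T\star_\theta f=0$ for every $f$, and testing against an approximate unit (or using that $\caS(\gM)$ separates points of $\caS'(\gM)$ together with $\langle T\star_\theta f,h\rangle=\langle T,f\star_\theta h\rangle$) forces $T=0$. Local convexity is built into the definition via seminorms. For completeness I would take a Cauchy net $(T_i)$ in $\caM_{\star_\theta}(\gM)$: for each fixed $f\in\caS(\gM)$ the nets $(T_i\star_\theta f)$ and $(f\star_\theta T_i)$ are Cauchy in the Fréchet space $\caS(\gM)$, hence converge; one defines the candidate limit $T$ as a distribution by $\langle T,f\star_\theta h\rangle:=\lim_i\langle T_i,f\star_\theta h\rangle$ (using that products $f\star_\theta h$ with the tracial identity span a dense subspace, or more robustly defining $T$ via $\langle T,\cdot\rangle$ on all of $\caS(\gM)$ from the left-multiplication limit), checks that the limiting left and right multiplication maps are continuous so $T\in\caM_{\star_\theta}(\gM)$, and that $T_i\to T$ in the $\norm\cdot\norm_{B,\alpha,L/R}$ seminorms using uniformity of the Cauchy estimate over bounded $B$.

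For nuclearity I would realize $\caM_{\star_\theta}(\gM)$ as a closed subspace of a countable product of spaces built from $\caS(\gM)$. Concretely, the map $T\mapsto\big((f\mapsto T\star_\theta f),(f\mapsto f\star_\theta T)\big)$ embeds $\caM_{\star_\theta}(\gM)$ into $L_b(\caS(\gM),\caS(\gM))\times L_b(\caS(\gM),\caS(\gM))$, and since $\caS(\gM)$ is nuclear Fréchet, spaces of continuous linear maps between such spaces (with the bounded-convergence topology) are nuclear; a closed subspace of a nuclear space is nuclear, so it remains to show the image is closed — which is essentially the completeness argument above — and that the embedding is a topological isomorphism onto its image, which is exactly the statement that the $\norm\cdot\norm_{B,\alpha,L/R}$ seminorms generate the subspace topology. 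Separate continuity of the product on $\caM_{\star_\theta}(\gM)$ follows because $S\mapsto S\star_\theta T$ and $T\mapsto S\star_\theta T$ are each, by construction, given by precomposition/postcomposition with a fixed continuous endomorphism of $\caS(\gM)$, hence continuous for the generating seminorms.

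The main obstacle I anticipate is the completeness/closedness step: one must produce a genuine tempered distribution $T$ from the pointwise limits of $T_i\star_\theta f$ and $f\star_\theta T_i$ and verify it lies in $\caM_{\star_\theta}(\gM)$ rather than merely that the multiplication operators converge. This requires a careful compatibility check between the left and right limits — that the same $T\in\caS'(\gM)$ governs both — which is where the tracial identity and the density of $\caS(\gM)\star_\theta\caS(\gM)$ in $\caS(\gM)$ (equivalently, existence of a bounded approximate identity for $\star_\theta$, obtained by transporting the Moyal one through $T_\theta$) do the real work; the remaining verifications (Hausdorff, local convexity, associativity by density, nuclearity of the ambient operator spaces, separate continuity) are routine once this is in place.
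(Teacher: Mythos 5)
The paper itself gives no proof of this proposition: it is quoted from \cite{Bieliavsky:2013sk}, and your overall strategy (well-definedness and associativity by duality, topology generated by the $\norm\cdot\norm_{B,\alpha,L/R}$ seminorms, completeness via Cauchy nets of multiplication operators, nuclearity by embedding into $L_b(\caS(\gM),\caS(\gM))\times L_b(\caS(\gM),\caS(\gM))$, separate continuity from the seminorm estimates $\norm S\star_\theta T\norm_{B,\alpha,L}=\norm S\norm_{T\star_\theta B,\alpha,L}$ etc.) is exactly the standard multiplier-algebra argument that the cited reference carries out. So the architecture is sound; nuclearity in particular is fine, and you do not even need the image to be closed for that step, since nuclearity passes to arbitrary subspaces.

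Two points in your sketch are genuinely load-bearing and not yet justified. First, you reduce associativity to ``density of $\caS(\gM)$ in $\caM_{\star_\theta}(\gM)$ plus separate continuity'', but density of $\caS(\gM)$ in the multiplier algebra for the strong topology defined by the seminorms is neither proved nor obvious (for multiplier algebras such density typically holds only in a weaker, strict-type topology). It is also unnecessary: from the duality definitions one gets $T\star_\theta(f\star_\theta h)=(T\star_\theta f)\star_\theta h$ and hence $(S\star_\theta T)\star_\theta f=S\star_\theta(T\star_\theta f)$ directly from associativity on $\caS(\gM)$, so associativity on $\caM_{\star_\theta}(\gM)$ follows without any density in the multiplier topology. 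Second, the nondegeneracy facts you invoke for Hausdorffness, for the compatibility identity $\langle S,T\star_\theta f\rangle=\langle T,f\star_\theta S\rangle$, and for completeness all rest on the density of $\caS(\gM)\star_\theta\caS(\gM)$ in $\caS(\gM)$ (equivalently an approximate identity), and your proposed source for it --- ``transporting the Moyal approximate identity through $T_\theta$'' --- does not work as stated: $T_\theta$ is an operator on $\caS(\gR^2)$ (with values in $\caS'(\gR^2)$), and $T_\theta(\caS(\gR^2))$ is not the adapted Schwartz space $\caS(\gM)$ on which $\star_\theta$ is defined via oscillatory integrals; the factorization/approximate-identity property for $(\caS(\gM),\star_\theta)$ needs its own argument (e.g.\ via the explicit kernel \eqref{eq-prod} and the oscillatory-integral calculus, or via the representation of $\star_\theta$ by a suitable quantization map), which is precisely the nontrivial input supplied in \cite{Bieliavsky:2013sk}. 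With that input established, the rest of your outline goes through.
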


\section{Construction of the star-exponential}

\subsection{Formal construction}

Let us follow the method developed in \cite{Bieliavsky:2013sk}. We want first to find a solution to the following equation
\begin{equation}
\partial_t f_t(a,\ell)=\frac{i}{\theta}(\lambda_X\star_\theta^0 f_t)(a,\ell)\label{eq-defexp}
\end{equation}
for $X=\alpha H+\beta E+\gamma F\in\kg$, with initial condition $\lim_{t\to 0}f_t(a,\ell)=1$. To remove the integral of this equation, we apply the partial Fourier transformation \eqref{eq-fourier} to obtain
\begin{align*}
\caF(\lambda_{H}\star_\theta^0 f)= \left(2i\partial_\xi+\frac{i\theta}{2}\partial_a)\right)&\hat f,\qquad 
\caF(\lambda_{E}\star_\theta^0 f)=e^{-2a-\frac{\theta\xi}{2}}\hat f,\\
\caF(\lambda_{F}&\star_\theta^0 f)=-e^{2a+\frac{\theta\xi}{2}}\hat f
\end{align*}
so that the equation \eqref{eq-defexp} can be reformuled as
\begin{equation*}
\partial_t \hat f_t(a,\xi)=\frac{i}{\theta}\Big[2i\alpha\partial_\xi+\frac{i\theta\alpha}{2}\partial_a+\beta e^{-2a-\frac{\theta\xi}{2}}-\gamma e^{2a+\frac{\theta\xi}{2}}\Big]\hat f_t(a,\xi).
\end{equation*}
The existence of a solution of this equation which satisfies the BCH property directly relies on the covariance \eqref{eq-cov} of the Moyal product. We have the explicit following result.
\begin{proposition}
For $X=\alpha H+\beta E+\gamma F\in\kg$, the expression
\begin{equation*}
E_{\star_\theta^0}(t\lambda_X)(a,\ell)=e^{\frac{i}{\theta}\Big(2\ell\alpha t+\frac{1}{\alpha}\sinh(\alpha t)(\beta e^{-2a}-\gamma e^{2a})\Big)}
\end{equation*}
is a solution of the equation \eqref{eq-defexp} with initial condition $\lim_{t\to 0}f_t(a,\ell)=1$. Moreover, it satisfies the BCH property: $\forall X,Y\in\kg$,
\begin{equation}
E_{\star_\theta^0}(\lambda_{\text{BCH}(X,Y)})=E_{\star_\theta^0}(\lambda_X)\star_\theta^0 E_{\star_\theta^0}(\lambda_Y).\label{eq-bchStar}
\end{equation}
\end{proposition}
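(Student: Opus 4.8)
The plan is to verify directly that the proposed closed formula solves the PDE, and then to obtain the BCH property as a consequence of the covariance relation \eqref{eq-cov} rather than by a separate computation. First I would set
\[
f_t(a,\ell)=e^{\frac{i}{\theta}\bigl(2\ell\alpha t+\frac{1}{\alpha}\sinh(\alpha t)(\beta e^{-2a}-\gamma e^{2a})\bigr)},
\]
apply the partial Fourier transform \eqref{eq-fourier} in the variable $\ell$, and check that the resulting $\hat f_t(a,\xi)$ satisfies
\[
\partial_t \hat f_t=\frac{i}{\theta}\Bigl[2i\alpha\partial_\xi+\tfrac{i\theta\alpha}{2}\partial_a+\beta e^{-2a-\frac{\theta\xi}{2}}-\gamma e^{2a+\frac{\theta\xi}{2}}\Bigr]\hat f_t.
\]
Since $f_t$ is, in $\ell$, a plane wave $e^{\frac{2i\alpha t}{\theta}\ell}$ times an $\ell$-independent factor, its partial Fourier transform is a translated delta distribution $\delta(\xi-\tfrac{2\alpha t}{\theta})$ times that factor; the operators $2i\partial_\xi$ and $\partial_a$ then act in the elementary way on this ansatz, and matching the $\partial_\xi$-term against $\partial_t$ reproduces the $2\ell\alpha t/\theta$ phase while the multiplication operators $e^{\mp 2a\mp\theta\xi/2}$ — evaluated on the support $\xi=2\alpha t/\theta$ — reproduce exactly $e^{\mp 2a}e^{\mp\alpha t}$, whose $t$-primitive is $\pm\frac{1}{\alpha}e^{\mp\alpha t}$, i.e.\ the $\sinh(\alpha t)$ term after accounting for the initial condition. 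The limit $\lim_{t\to0}f_t=1$ is immediate from the formula. It is slightly cleaner to do this check back on the $(a,\ell)$ side using the explicit Moyal formula \eqref{eq-moyal} applied to $\lambda_X\star^0_\theta f_t$, exploiting that $f_t$ has Gaussian/exponential dependence in $a$ and linear-exponential phase in $\ell$; I would present whichever of the two is shorter.

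For the BCH property \eqref{eq-bchStar}, the key observation is that $E_{\star^0_\theta}(t\lambda_X)$ is the one-parameter family generated by left $\star^0_\theta$-multiplication by $\frac{i}{\theta}\lambda_X$, so formally $E_{\star^0_\theta}(t\lambda_X)=\exp_{\star^0_\theta}\!\bigl(\tfrac{it}{\theta}\lambda_X\bigr)$ where $\exp_{\star^0_\theta}$ is the exponential in the associative algebra $(C^\infty(\gM)[[\theta]],\star^0_\theta)$. By covariance \eqref{eq-cov}, the map $X\mapsto \frac{i}{\theta}\lambda_X$ is a Lie algebra homomorphism from $\kg$ into $(C^\infty(\gM)[[\theta]],[\,\cdot,\cdot]_{\star^0_\theta})$; hence the assignment $X\mapsto E_{\star^0_\theta}(\lambda_X)$ behaves like the exponential map of this associative algebra restricted to the image Lie subalgebra, and therefore satisfies the Baker–Campbell–Hausdorff identity $E_{\star^0_\theta}(\lambda_{\mathrm{BCH}(X,Y)})=E_{\star^0_\theta}(\lambda_X)\star^0_\theta E_{\star^0_\theta}(\lambda_Y)$, with $\mathrm{BCH}$ the group-BCH of $G$ computed in \eqref{eq-bchGroup}. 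Concretely I would argue: both sides, as functions of the pair $(X,Y)$, are determined by the ODE \eqref{eq-defexp} together with the fact that $t\mapsto E_{\star^0_\theta}(\lambda_X)\star^0_\theta E_{\star^0_\theta}(t\lambda_Y)$ is the star-exponential flow of $\lambda_Y$ started from $E_{\star^0_\theta}(\lambda_X)$; uniqueness of solutions of this (first-order, linear) evolution equation then forces the two sides to coincide once one checks they agree at the relevant initial time and that $\lambda_{\mathrm{BCH}(X,Y)}$ is the correct generator, which is precisely \eqref{eq-cov}.

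The main obstacle is not the PDE verification — that is a routine, if slightly delicate, computation with translated deltas or Gaussians — but making the BCH argument rigorous at the non-formal level: one must ensure that $E_{\star^0_\theta}(\lambda_X)$ lies in a space where $\star^0_\theta$-multiplication and the composition of two such star-exponentials are well-defined (as honest oscillatory integrals, via the multiplier formalism of the preceding section), and that the formal manipulation ``star-exponential of a Lie algebra homomorphism satisfies group-BCH'' is licensed there. I would handle this by working first purely formally in $\theta$ (where the covariance identity \eqref{eq-cov} holds on the nose and the argument above is clean), establishing \eqref{eq-bchStar} as an identity of formal power series, and then remarking that since both sides are, by the explicit formula, smooth (indeed real-analytic) in $\theta$ and $t$ and lie in $\caM_{\star_\theta}(\gM)$, the formal identity upgrades to a genuine one. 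The only genuinely $\kg$-specific input is the explicit $\mathrm{BCH}$ formula \eqref{eq-bchGroup}, against which one cross-checks that plugging $\mathrm{BCH}(X,Y)$ into the closed-form $E_{\star^0_\theta}$ indeed reproduces the $\star^0_\theta$-product of the two individual exponentials; this last verification is a direct substitution using only $\sinh$ addition formulas.
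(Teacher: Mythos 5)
Your proposal is correct in substance, and its computational core is essentially the paper's: the paper also passes to the partial Fourier transform \eqref{eq-fourier}, but instead of verifying the closed formula it \emph{derives} it by solving the transported equation along characteristics (the change of variables $b=-\frac{a}{\alpha}-\frac{\theta\xi}{4\alpha}$, $c=-\frac{a}{\alpha}+\frac{\theta\xi}{4\alpha}$), and it proves the BCH property \eqref{eq-bchStar} by the direct computation with the Moyal kernel \eqref{eq-moyal} and the explicit group formula \eqref{eq-bchGroup} --- which is exactly your closing ``cross-check by direct substitution''. The genuinely different element in your write-up is the attempt to obtain \eqref{eq-bchStar} abstractly from covariance \eqref{eq-cov}; the paper invokes covariance only as motivation in the sentence preceding the proposition and does not rest the proof on it. What the abstract route would buy is independence from the specific closed form of $E_{\star_\theta^0}$; what the direct route buys is a short, unconditional verification, since the $\ell_i$-integrations in \eqref{eq-moyal} produce delta functions and the rest is hyperbolic-function algebra.

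Two steps of your abstract argument are shakier than you indicate, though not fatal since the direct substitution you include suffices. First, $t\mapsto E_{\star_\theta^0}(\lambda_X)\star_\theta^0 E_{\star_\theta^0}(t\lambda_Y)$ does \emph{not} solve \eqref{eq-defexp} with a fixed left generator: its $t$-derivative is $\tfrac{i}{\theta}\,E_{\star_\theta^0}(\lambda_X)\star_\theta^0\lambda_Y\star_\theta^0 E_{\star_\theta^0}(t\lambda_Y)$, so the uniqueness argument requires the right-multiplication analogue of \eqref{eq-defexp} together with a derivative-of-the-exponential identity for $t\mapsto\mathrm{BCH}(X,tY)$ --- essentially as much work as the direct check. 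Second, ``working formally in $\theta$'' is delicate because $\tfrac{i}{\theta}\lambda_X$ has a pole at $\theta=0$, so the formal BCH argument must be organized in $t$ (or in the Lie-algebra variables), not in powers of $\theta$. Finally, a small bookkeeping point in your delta-function verification: the $\sinh(\alpha t)$ phase is not the $t$-primitive of $e^{\mp\alpha t}$ alone (that primitive is $(1-e^{\mp\alpha t})/(\pm\alpha)$); the $\tfrac{i\theta\alpha}{2}\partial_a$ term supplies the missing contribution, and with it the check does close.
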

\begin{proof}
By performing the following change of variables $b=-\frac{a}{\alpha}-\frac{\theta\xi}{4\alpha}$ and $c=-\frac{a}{\alpha}+\frac{\theta\xi}{4\alpha}$, we reformulate the equation as
\begin{equation*}
\partial_t \hat f_t=\partial_b\hat f_t+\frac{i\beta}{\theta} e^{2\alpha b}-\frac{i\gamma}{\theta}e^{-2\alpha b},
\end{equation*}
whose solution is given by $\hat f_t(b,c)=\exp\Big(-\int^b_0(\frac{i\beta}{\theta} e^{2\alpha s}-\frac{i\gamma}{\theta}e^{-2\alpha s})\dd s\Big)h(b+t,c)$, where $h$ is an arbitrary function. By assuming the initial condition, we obtain the expression of $E_{\star_\theta^0}(tX)(a,\ell)$. The BCH property is given by direct computations from the expression of the product \eqref{eq-moyal} and from \eqref{eq-bchGroup}.
\end{proof}

Finally, we push this solution by $T_\theta$:
\begin{multline*}
E_{\star_{\theta,\caP}}(tX)(a,\ell):=T_\theta E_{\star_\theta^0}(tT_\theta^{-1}\lambda_X)(a,\ell)\\
=\frac{\caP_\theta(0)\cosh(\alpha t)}{\caP_\theta(\frac{2\alpha t}{\theta})}e^{\frac{i}{\theta}\sinh(\alpha t)(2\ell+\frac{\beta}{\alpha}e^{-2a}-\frac{\gamma}{\alpha} e^{2a})+\frac{2\caP_\theta'(0)}{\theta\caP_\theta(0)}\alpha t}.
\end{multline*}
It also satisfies the BCH property \eqref{eq-bchStar}.

\subsection{Multiplier property}

For the star-product with tracial property, we want to define the star-exponential at the non-formal level. We can use the oscillatory integral in the star-product to show \cite{Bieliavsky:2013sk}:
\begin{theorem}
For any $X\in\kg$, the function
\begin{equation*}
E_{\star_{\theta}}(tX)(a,\ell)=\sqrt{\cosh(\alpha t)}e^{\frac{i}{\theta}\sinh(\alpha t)(2\ell+\frac{\beta}{\alpha}e^{-2a}-\frac{\gamma}{\alpha} e^{2a})}
\end{equation*}
lies in the multiplier algebra $\caM_{\star_\theta}(\gM)$.
\end{theorem}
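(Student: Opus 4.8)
The plan is to show that the displayed function $E_{\star_\theta}(tX)$ acts on $\caS(\gM)$ from both sides with values in $\caS(\gM)$, continuously, by exploiting the oscillatory-integral regularization already set up in \eqref{eq-osc}. Write $u(a,\ell) := \sqrt{\cosh(\alpha t)}\,e^{\frac{i}{\theta}\sinh(\alpha t)(2\ell+\frac{\beta}{\alpha}e^{-2a}-\frac{\gamma}{\alpha}e^{2a})}$; this is a smooth function, bounded (in modulus it equals the constant $\sqrt{\cosh(\alpha t)}$), but with a phase that grows like $e^{\pm 2a}$, so $u\notin\caS(\gM)$ and $u\star_\theta f$ cannot be computed by a naively convergent integral. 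The first step is therefore to substitute $f=u$ into the product formula \eqref{eq-prod} (with $\caP_\theta(a)=\sqrt{\cosh(a\theta/2)}$) and read off, after the change of variables $r_i=\sinh(2a_i)$, the phase and amplitude of the resulting integral in the four variables $(a_i,\ell_i)$. The key observation is that the total phase is an affine function of $(\ell_1,\ell_2)$ (coming both from the kernel of \eqref{eq-prod} and from the linear-in-$\ell$ part of $u$), so one is genuinely in the situation of \eqref{eq-osc}: the integral is an oscillatory integral of a symbol that is polynomially bounded in $(r_i,\ell_i)$ together with all its derivatives — once we check that the amplitude (which now contains $\sqrt{\cosh}$ factors, the $\caP_\theta$ quotient, and the $e^{\pm 2a}$ pieces of $u$'s phase absorbed into exponentials of bounded functions after passing to $r$) is indeed such a symbol.

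Concretely, I would carry out the following steps. First, plug $h=u$ (resp.\ $f=u$) into \eqref{eq-prod} and perform the partial integration over the ``spectator'' variable to diagonalize, reducing $f\star_\theta u$ to an integral whose phase is $\frac{2i}{\theta}(\sinh(2a_2')\ell_1'-\sinh(2a_1')\ell_2')$ after an affine shift of the $\ell$-variables (this is the normal form of the Moyal-type kernel used in \eqref{eq-osc}). Second, identify the amplitude: it is a product of $\cosh$-type factors, the bounded quotient $\caP_\theta(\cdot)\caP_\theta(\cdot)/\caP_\theta(\cdot)\caP_\theta(0)$ from the hypothesis of the Theorem of Section~3, the factor $f(a_1,\ell_1)$ (Schwartz), and the constant modulus $\sqrt{\cosh(\alpha t)}$ from $u$; crucially the $\ell$-dependence of $u$ has gone into the phase, and the $e^{\pm 2a}$ in $u$'s phase, upon $r=\sinh(2a)$, is $e^{\pm(\text{something bounded by}\sqrt{1+r^2}\mp r)}$ which is bounded. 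Third, invoke \eqref{eq-osc}: apply the differential operators $\big((1-\tfrac{\theta^2}{4}\partial_{\ell}^2)/(1+\sinh^2 2a)\big)^{k_i}$ and the $a$-regularizers $\big((1-\tfrac{\theta^2}{16\cosh^2 2a}\partial_a^2)/(1+\ell^2)\big)^{p_i}$ enough times so that the resulting symbol $D(\cdots)$ is integrable, and differentiate under the integral sign to estimate each Schwartz seminorm $\norm{u\star_\theta f}_\alpha$ of the output by a finite sum of seminorms $\norm f_{\alpha'}$; this simultaneously establishes that $u\star_\theta f\in\caS(\gM)$ and that the map $f\mapsto u\star_\theta f$ is continuous $\caS(\gM)\to\caS(\gM)$. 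Fourth, repeat verbatim for $f\mapsto f\star_\theta u$, using the left-slot version of the same estimates; by the definition of $\caM_{\star_\theta}(\gM)$ this gives $E_{\star_\theta}(tX)=u\in\caM_{\star_\theta}(\gM)$.

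The main obstacle I expect is the bookkeeping in the second step: verifying that, after the change of variables $r_i=\sinh(2a_i)$, the amplitude of the integral representing $u\star_\theta f$ — which mixes $\sqrt{\cosh}$ factors, the $\caP_\theta$-quotient, the explicitly $a$-dependent exponential pieces inherited from the phase of $u$, and the derivatives that land on $u$ when applying the $a$-regularizers of \eqref{eq-osc} — really is ``a linear combination of products of bounded functions (with all derivatives bounded) in $(r_i,\ell_i)$ with powers of $\partial_{\ell_i}$ and $\frac1{\cosh 2a_i}\partial_{a_i}$,'' i.e.\ a symbol of the type tolerated by the oscillatory-integral machinery. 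In particular one must check that each $\cosh^{-1}(2a)\,\partial_a$ hitting the factor $e^{\frac{i}{\theta}\sinh(\alpha t)(\frac{\beta}{\alpha}e^{-2a}-\frac{\gamma}{\alpha}e^{2a})}$ produces something still bounded in $r$ — this works because $\partial_a e^{\pm 2a}=\pm 2e^{\pm 2a}$ and $e^{\pm 2a}/\cosh(2a)$ is bounded. Once this symbol-class membership is confirmed, the continuity and the seminorm estimates are routine applications of \eqref{eq-osc} exactly as in \cite{Bieliavsky:2013sk}, and the tracial normalization $\caP_\theta(a)=\sqrt{\cosh(a\theta/2)}$ is what makes the $\caP_\theta$-quotient in \eqref{eq-prod} bounded so that no extra growth is introduced.
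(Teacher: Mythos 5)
Your proposal is correct and follows essentially the same route as the paper, which proves membership in $\caM_{\star_\theta}(\gM)$ precisely by treating $E_{\star_\theta}(tX)\star_\theta f$ and $f\star_\theta E_{\star_\theta}(tX)$ as oscillatory integrals in the sense of \eqref{eq-osc} (the paper itself only invokes this machinery, citing \cite{Bieliavsky:2013sk}, rather than writing out the estimates). Your key checks — constant modulus, the $\ell$-dependence of the exponential entering the phase affinely, and the factors $e^{\pm 2a}$ produced by derivatives being compensated by the regularizers $\frac{1}{\cosh(2a)}\partial_a$ — are exactly the points that make the seminorm estimates and two-sided continuity go through.
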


As it belongs to a specific ``functional space'', the function $E_{\star_\theta}$ is called the {\defin non-formal star-exponential} of the group $G$ for the star-product $\star_\theta$. The BCH property
\begin{equation*}
E_{\star_\theta}(\text{BCH}(X,Y))=E_{\star_\theta}(X)\star_\theta E_{\star_\theta}(Y)
\end{equation*}
now makes sense in the topological space $\caM_{\star_\theta}(\gM)$. This functional framework is useful for applications of the star-exponential discussed in the introduction.

\bibliographystyle{utcaps}
\bibliography{biblio-these,biblio-perso,biblio-recents}

\end{document}